\newcommand{\bracketed}[2]{\genfrac{[}{]}{0pt}{0}{#1}{#2}}
\newtheorem{thm}{Theorem}[section]
\newtheorem{rmk}[thm]{Remark}
\newtheorem{cor}[thm]{Corollary}
\title{\bf Hankel Transform of the First Form $(q, r)$-Dowling Numbers}
\author{{\large\bf Roberto B. Corcino}\\
Research Institute for Computational Mathematics and Physics\\
Cebu Normal University\\
Cebu City, Philippines 6000}
\date{}
\begin{document}

\maketitle

\begin{abstract}
In this paper, the Hankel transform of the generalized $q$-exponential polynomial of the first form $(q,r)$-Whitney numbers of the second kind is established using the method of Cigler. Consequently, the Hankel transform of the first form $(q,r)$-Dowling numbers is obtained as special case. 

\bigskip
\noindent{\bf Keywords}: $r$-Whitney numbers, $r$-Dowling numbers, generating function, $q$-analogue, $q$-exponential function, $A$-tableau, convolution formula, Hankel transform, Hankel matrix, $k$-binomial transform.
\end{abstract}

\section{Introduction}

The $r$-Dowling numbers $D_{m,r}(n)$ are defined in \cite{CHEON} as the sum of $r$-Whitney numbers of the second $W_{m,r}(n,k)$ \cite{corcino, Mezo2}. More precisely,   
$${D}_{m,r}(n):=\sum_{k=0}^nW_{m,r}(n,k).$$
These numbers are certain generalization of ordinary Bell numbers $B_n$ \cite{Com}, $r$-Bell numbers $B_{r}(n)$ \cite{Mezo3}, and noncentral Bell numbers $B_{n,a}$ \cite{Kout}. That is, when $m=1$, the $r$-Dowling numbers reduce to $r$-Bell numbers and noncentral Bell numbers. Furthermore, when $m=1, r=0$, these yield the ordinary Bell numbers.

\smallskip
J. Layman \cite{Lay} defined the Hankel transform of an integers sequence $(a_n)$ as a sequence of the following determinants $d_n$ of Hankel matrix of order $n$ 
\begin{equation}
d_n=\left|
\begin{matrix}
    a_0       & a_1       & a_2            & \dots & a_n \\
    a_1       & a_2 	& a_3            & \dots & a_{n+1} \\
    a_2       & a_3 	& a_4            & \dots & a_{n+2} \\
    \hdotsfor{5} \\
    a_n       & a_{n+1} & a_{n+2} & \dots & a_{2n}
\end{matrix}
\right|.
\end{equation}
Aigner \cite{Aig} derived the Hankel transform of the ordinary Bell numbers to be
\begin{equation}\label{det1}
det(B_{i+j})_{0\le i,j\le n}=\prod_{k=0}^nk!
\end{equation}
which is exactly the Hankel transform obtained by Mezo \cite{Mezo3} for $r$-Bell numbers using Layman's Theorem \cite{Lay} on the invariance of Hankel transform.

\smallskip
Using the method of Aiger \cite{Aig} and Layman's Theorem \cite{Lay},  the sequence of $(r,\beta)$-Bell numbers in \cite{Cor2, Cor3}, denoted by $\{G_{n,r,\beta}\}$, has been shown to possess the following Hankel transform (see \cite{Cor1}) 
$$H(G_{n,r,\beta})=\prod_{j=0}^n\beta^jj!.$$ 
It is worth mentioning that the $(r,\beta)$-Bell numbers are equivalent to the $r$-Dowling numbers $D_{m,r}(n)$, which are defined in \cite{CHEON} as
$$D_{m,r}(n)=\sum_{k=0}^nW_{m,r}(n,k)$$
with $W_{m,r}(n,k)$ denotes the $r$-Whitney numbers of the second kind introduced by Mezo in \cite{Mezo2}. In \cite{Cor1}, the authors have also tried to derive the Hankel transform of the sequence of $q$-analogue of $(r,\beta)$-Bell numbers. In this attempt, they used the $q$-analogue defined in \cite{Cor4}. But they failed to derive it.  

\smallskip
Just recently, another definition of $q$-analogue of $r$-Whitney numbers of the second $W_{m,r}[n,k]_q$ was introduced in \cite{Cor5, Cor6} by means of the following triangular recurrence relation
\begin{equation}\label{whitrec}
W_{m,r}[n,k]_{q}= q^{m(k-1)+r}W_{m,r}[n-1,k-1]_q+[mk+r]_{q} W_{m,r}[n-1,k]_{q}.
\end{equation}
From this definition, two more forms of the $q$-analogue were defined in \cite{Cor5, Cor6} as
\begin{align}
W^*_{m,r}[n,k]_q&:=q^{-kr-m\binom{k}{2}}W_{m,r}[n,k]_q\label{2ndform}\\
\widetilde{W}_{m,r}[n,k]_q&:=q^{kr}W^*_{m,r}[n,k]_q=q^{-m\binom{k}{2}}W_{m,r}[n,k]_q,\label{3rdform}
\end{align}
where $W^*_{m,r}[n,k]_q$ and $\widetilde{W}_{m,r}[n,k]_q$ denote the second and third forms of the $q$-analogue, respectively. Corresponding to these, three forms of $q$-analogues for $r$-Dowling numbers (or $(q,r)$-Dowling numbers) may be defined as follows:
\begin{align}
{D}_{m,r}[n]_q&:=\sum_{k=0}^nW_{m,r}[n,k]_q\label{1stformqDow}\\
{D}^*_{m,r}[n]_q&:=\sum_{k=0}^nW^*_{m,r}[n,k]_q\label{2ndformqDow}\\
\widetilde{D}_{m,r}[n]_q&:=\sum_{k=0}^n\widetilde{W}_{m,r}[n,k]_q.\label{3rdformqDow}
\end{align}
However, among the three forms of $(q,r)$-Dowling numbers, only the first form has not been given a Hankel transform. The third form was thoroughly studied in \cite{Cor6} and was given the Hankel transform as follows
\begin{equation}\label{HankelqRDN}
H(\widetilde{D}_{m,r}[n]_q)=q^{m\binom{n+1}{3}-rn(n+1)}[0]_{q^m}![1]_{q^m}!\ldots [n]_{q^m}![m]_q^{\binom{n+1}{2}}.
\end{equation}
This Hankel transform was derived using the Hankel transform of $q$-exponential polynomials in \cite{Ehr1}, the Layman's Theorem in \cite{Lay} and the Spivey-Steil Theorem in \cite{Spiv}. This method cannot be used to derive the Hankel transform of the first and second forms of $q$-analogues for $r$-Dowling numbers. But the method used by Cigler in \cite{Cig2} was found to be useful to derive the Hankel transforms for the second form of the $(q,r)$-Dowling numbers. The said Hankel transform was derived in \cite{Cor5}, which is given by
\begin{equation*}
H({D}^*_{m,r}[n]_q)=[m]_q^{\binom{n}{2}}q^{\binom{n}{3}+r\binom{n}{2}}\prod_{k=0}^{n-1}[k]_{q^m} !
\end{equation*}

\smallskip
Corcino et al. \cite{Cor7} have made a preliminary investigation for the first form $(q,r)$-Dowling numbers $D_{m,r}[n]_q$ by establishing an explicit formula expressed in terms of the first form $(q,r)$-Whitney numbers of the second kind and $(q,r)$-Whitney-Lah numbers. In this paper, the Hankel transform for the sequence $\left(D_{m,r}[n]_q\right)_{n=0}^{\infty}$ will be established using Cigler's method \cite{Cig2}. However, a more general form of $D_{m,r}[n]_q$, denoted by $\Phi_n[x,r,m]_q$, is considered, which is defined in polynomial form as follows:
\begin{equation}\label{genq1}
\Phi_{n}\left[ x,r,m\right]_q=\sum\limits_{k=0}^{n}W_{m,r}[n,k]_{q}x^{k}
\end{equation}
such that, when $x=1$, $\Phi_n[1,r,m]_q=D_{m,r}[n]_q$.

\section{Generalized $q$-Exponential Polynomials}
We may call $\Phi_{n}\left[ x,r,m\right]_q $ to be the generalized $q$-exponential polynomial of $q$-analogue of $r$-Whitney numbers of the second kind. Note that we can rewrite \eqref{genq1} as
\begin{align}
\Phi_{n-1}\left[ x,r,m\right]_q&=\sum\limits_{k=0}^{n-1}W_{m,r}[n-1,k]_{q}x^{k}\nonumber \\
\Phi_{n-1}\left[ qx,r,m\right]_q&=\sum\limits_{k=0}^{n-1}W_{m,r}[n-1,k]_{q}q^{rk+m\binom{k}{2}+k}x^{k}.\label{eqn_trans} 
\end{align}
The following theorem contains a recursive relation for $\Phi_{n}\left[ x,r,m\right]_q $.
\begin{thm}
The generalized $q$-exponential polynomials $\Phi_{n}\left[ x,r,m\right]_q $ of $q$-analogue of $r$-Whitney numbers of the second kind satisfy the following relation
\begin{align}
\Phi_n[x,r,m]_q &=\left[q^{r}x+(q^m-1)q^{r}x^2D_{q^m}+[r]_q+q^r[m]_qxD_{q^m}\right]\Phi_{n-1}[x,r,m]_q.\label{phi}
\end{align}
\end{thm}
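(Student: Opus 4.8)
The plan is to prove the operator identity by substituting the triangular recurrence \eqref{whitrec} directly into the defining sum \eqref{genq1} and then re-expressing the two resulting sums in terms of $\Phi_{n-1}$ and its Jackson $q^m$-derivative $D_{q^m}$, where $D_{q^m}f(x)=\frac{f(q^mx)-f(x)}{(q^m-1)x}$ so that $D_{q^m}x^k=[k]_{q^m}x^{k-1}$ with $[k]_{q^m}=\frac{q^{mk}-1}{q^m-1}$. First I would write
$$\Phi_n[x,r,m]_q=\sum_{k=0}^n\Big(q^{m(k-1)+r}W_{m,r}[n-1,k-1]_q+[mk+r]_qW_{m,r}[n-1,k]_q\Big)x^k$$
and split this into $S_1$ (the first term) and $S_2$ (the second term), noting that the boundary values $W_{m,r}[n-1,-1]_q$ and $W_{m,r}[n-1,n]_q$ vanish, so that both sums may be taken over $0\le k\le n-1$.

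For $S_1$ I would shift the index to $j=k-1$ to obtain $S_1=q^rx\sum_{j=0}^{n-1}q^{mj}W_{m,r}[n-1,j]_qx^j$, and then invoke the elementary identity $q^{mj}x^j=x^j+(q^m-1)\,xD_{q^m}x^j$, which follows from $(q^m-1)[j]_{q^m}=q^{mj}-1$. Summing against the coefficients $W_{m,r}[n-1,j]_q$ converts $S_1$ into $q^rx\,\Phi_{n-1}[x,r,m]_q+(q^m-1)q^rx^2D_{q^m}\Phi_{n-1}[x,r,m]_q$, producing the first and second operator terms in \eqref{phi}.

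For $S_2$ the key is to decompose the $q$-integer coefficient via $[mk+r]_q=[r]_q+q^r[m]_q[k]_{q^m}$, which rests on the addition rule $[a+b]_q=[a]_q+q^a[b]_q$ together with the factorization $[m]_q[k]_{q^m}=[mk]_q$. Recognizing that $[k]_{q^m}x^k=xD_{q^m}x^k$, this turns $S_2$ into $[r]_q\Phi_{n-1}[x,r,m]_q+q^r[m]_qxD_{q^m}\Phi_{n-1}[x,r,m]_q$, which are exactly the third and fourth operator terms. Adding $S_1$ and $S_2$ and factoring out $\Phi_{n-1}[x,r,m]_q$ then yields \eqref{phi}.

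I expect the only real obstacle to be the bookkeeping rather than any conceptual difficulty: correctly tracking the exponent $m(k-1)+r$ after the index shift and verifying the two $q$-integer identities so that the factors $q^r$, $(q^m-1)$, and $[m]_q$ land in precisely the right places, as well as confirming the correspondence between multiplication by $q^{mk}$ or $[k]_{q^m}$ and the action of $xD_{q^m}$. The transformation \eqref{eqn_trans} recording the effect of $x\mapsto qx$ offers an alternative route, but the direct substitution above keeps every coefficient explicit and avoids reconciling the scaling factors, so I would prefer it.
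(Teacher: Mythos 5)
Your proposal is correct and follows essentially the same route as the paper: substitute the recurrence \eqref{whitrec} into \eqref{genq1}, shift the index in the first sum to extract $q^rx\sum_k q^{mk}W_{m,r}[n-1,k]_qx^k$, and split $[mk+r]_q=[r]_q+q^r[m]_q[k]_{q^m}$ in the second. The only (cosmetic) difference is that you convert the factor $q^{mk}$ into the operator $1+(q^m-1)xD_{q^m}$ termwise on monomials, whereas the paper first recognizes that sum as the dilation $\Phi_{n-1}[q^mx,r,m]_q$ and then applies the identity $f(q^mx)=\left(1+(q^m-1)xD_{q^m}\right)f(x)$ --- the same fact in functional form.
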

\begin{proof}
Using \eqref{whitrec}, equation \eqref{genq1} can be written as
\begin{align*}
&\Phi_{n}\left[ x,r,m\right]_q =\sum\limits_{k=0}^{n}W_{m,r}[n,k]_{q}x^{k}\\
&\;\;\;\;=\sum\limits_{k=0}^{n}q^{mk-m+r}W_{m,r}[n-1,k-1]_qx^{k}+\sum\limits_{k=0}^{n}[mk+r]_qW_{m,r}[n-1,k]_qx^{k}\\
&\;\;\;\;=\sum\limits_{k=0}^{n-1}q^{m(k+1)-m+r}W_{m,r}[n-1,k]_qx^{k+1}+\left([r]_q+q^r[m]_qxD_{q^m}\right)\Phi_{n-1}[x,r,m]_q\\
&\;\;\;\;=x\sum\limits_{k=0}^{n-1}q^{mk+r}W_{m,r}[n-1,k]_qx^k+\left([r]_q+q^r[m]_qxD_{q^m}\right)\Phi_{n-1}[x,r,m]_q\\
&\;\;\;\;=xq^{r}\sum\limits_{k=0}^{n-1}q^{mk}W_{m,r}[n-1,k]_qx^k+\left([r]_q+q^r[m]_qxD_{q^m}\right)\Phi_{n-1}[x,r,m]_q,
\end{align*}
where $D_q$ denotes the $q$-derivative operator defined by
\begin{equation}\label{qDer}
D_qf(x)=\frac{f(x)-f(qx)}{(1-q)x}.
\end{equation}
Hence, using \eqref{eqn_trans}, we have
\begin{align}
\Phi_n[x,r,m]_q &=xq^{r}\Phi_{n-1}[q^mx,r,m]_q+\left([r]_q+q^r[m]_qxD_{q^m}\right)\Phi_{n-1}[x,r,m]_q.\label{genq2}
\end{align}
Note that \eqref{qDer} can be expressed as
\begin{align*}
f(qx)&=(q-1)xD_qf(x)+f(x)\\
f(q^mx)&=(q^m-1)xD_{q^m}f(x)+f(x)\\
f(q^mx)&=((q^m-1)xD_{q^m}+1)f(x).
\end{align*}
This implies that
$$\Phi_{n-1}[q^mx,r,m]_q=\left(1+(q^m-1)xD_{q^m}\right)\Phi_{n-1}[x,r,m]_q.$$
Thus, equation \eqref{genq2} can further be written as
\begin{align*}
\Phi_n[x,r,m]_q &=xq^{r}\left(1+(q^m-1)xD_{q^m}\right)\Phi_{n-1}[x,r,m]_q\\
&\;\;\;\;\;+\left([r]_q+q^r[m]_qxD_{q^m}\right)\Phi_{n-1}[x,r,m]_q\\
&=\left[q^{r}x+(q^m-1)q^{r}x^2D_{q^m}\right.\nonumber\\
&\left.\;\;\;\;\;+[r]_q+q^r[m]_qxD_{q^m}\right]\Phi_{n-1}[x,r,m]_q,
\end{align*}
which is exactly the desired relation.
\end{proof}
\begin{rmk}\rm
Let 
\begin{align*}
\hat{D}_{qx}&=\left[q^{r}x+(q^m-1)q^{r}x^2D_{q^m}+[r]_q+q^r[m]_qxD_{q^m}\right],\\
\tilde{D}_{qx}&=\left[q^{r}+(q^m-1)q^{r}xD_{q^m}+[r]_q+q^r[m]_qD_{q^m}\right],\\
\hat{D}_{qx}&=x\tilde{D}_{qx}.
\end{align*}
Then \eqref{phi} can be written as
\begin{align}
\Phi_{n}[x,r,m]_{q}&=\hat{D}_{qx}\Phi_{n-1}[x,r,m]_{q}.\label{Phi2}\\
\Phi_{n}[x,r,m]_{q}&=x\tilde{D}_{qx}\Phi_{n-1}[x,r,m]_{q}.\label{Phi3}
\end{align}
By repeated application of \eqref{Phi2},
\begin{align*}
\Phi_{n}[x,r,m]_{q}&=\hat{D}_{qx}\Phi_{n-1}[x,r,m]_{q}\\
&=\hat{D}_{qx}\left( \hat{D}_{qx}\Phi_{n-2}[x,r,m]_{q}\right)\\
&=\hat{D}_{qx}^{2}\Phi_{n-2}[x,r,m]_{q}\\
&\vdots\\
&=\hat{D}_{qx}^{n}\Phi_{0}[x,r,m]_{q}\\
&=\hat{D}_{qx}^{n}.
\end{align*}
\end{rmk}

\section{Hankel Transform of ${D}_{m,r}[n]_q$}
Let $\langle\langle x\rangle\rangle_{r,m,k}=\prod\limits_{j=0}^{k-1}\frac{\left(x-[r+jm]_q \right) }{q^{r+jm}}=q
^{-rk-m\binom{k}{2}}\langle x\rangle_{r,m,k}$.\\
The horizontal generating function of $W_{m,r}[n,k]_q$ is given by:
\begin{equation*}
\sum\limits_{k=0}^{n}W_{m,r}[n,k]_q\langle x\rangle_{r,m,k}=x^n
\end{equation*}
Define a linear functional $G_{r,q}$ by
\begin{equation*}
G_{r,q}\left( \langle\langle x\rangle\rangle_{r,m,n}\right)=a^n
\end{equation*}
and a linear operator $V_{r,q}$ by
\begin{equation*}
V_{r,q}\left( \langle\langle x\rangle\rangle_{r,m,n}\right)=x^n.
\end{equation*}
Then
\begin{align*}
V_{r,q}\left( x^n\right)  &=\sum\limits_{k=0}^{n}W_{m,r}[n,k]_q V_{r,q}\left(\langle x\rangle_{r,m,k}\right)\\
&=\sum\limits_{k=0}^{n}W_{m,r}[n,k]_q q^{rk+m\binom{k}{2}}V_{r,q}\left(\langle\langle x\rangle\rangle_{r,m,k}\right)\\
&=\sum\limits_{k=0}^{n}W_{m,r}[n,k]_q q^{rk+m\binom{k}{2}}x^k\\
&=\Phi_n[x,r,m]_q
\end{align*}
It can be easily verified from equation \eqref{Phi3} that
\begin{equation*}
V_{r,q}xV_{r,q}^{-1}=x\tilde{D}_{qx}. 
\end{equation*}
Consider the polynomial
\begin{equation*}
g_{n,q}(x,a,r,m)=\sum\limits_{k=0}^{n}(-a)^{k}q^{\binom{k}{2}}\bracketed{n}{k}_{q}\langle\langle x\rangle\rangle_{r,m,n-k}.
\end{equation*}
Then
\begin{align*}
V_{r,q}\left( g_{n,q}(x,a,r,m)\right)&=\sum\limits_{k=0}^{n}(-a)^{k}q^{\binom{k}{2}}\bracketed{n}{k}_{q}V_{r,q}\left( \langle\langle x\rangle\rangle_{r,m,n-k}\right) \\
&=\sum\limits_{k=0}^{n}(-a)^{k}q^{\binom{k}{2}}\bracketed{n}{k}_{q}x^{n-k}\\
&=p_{n,q}(x,a).
\end{align*}
This implies that $V_{r,q}^{-1}p_{n,q}(x,a)=g_{n,q}(x,a,r,m)$. Now,
\begin{align*}
V_{r,q}xg_{n,q}(x,a,r,m)&=V_{r,q}xV_{r,q}^{-1}p_{n,q}(x,a)\\
&=x\left[q^{r}+(q^m-1)q^{r}xD_{q^m}+[r]_q+q^r[m]_qD_{q^m}\right]p_{n,q}(x,a).
\end{align*}
Applying the operator to $p_{n,q}(x,a)$, we get
\begin{align*}
V_{r,q}xg_{n,q}(x,a,r,m)&=V_{r,q}xV_{r,q}^{-1}p_{n,q}(x,a)\\
&=x\left[q^{r}+(q^m-1)q^{r}xD_{q^m}+[r]_q+q^r[m]_qxD_{q^m}\right]p_{n,q}(x,a)\\
&=q^{r}xp_{n,q}(x,a)+(q^m-1)q^{r}x^2D_{q^m}p_{n,q}(x,a)+[r]_{q}p_{n,q}(x,a)\\
&\;\;\;\;+q^r[m]_qxD_{q^m}p_{n,q}(x,a).
\end{align*}
Note that
\begin{align*}
xp_{n,q}(x,a)&=\sum_{k=0}^{n}(-a)^kq^{\binom{k}{2}}\bracketed{n}{k}x^{n+1-k}\\
&=\sum_{k=0}^{n}(-a)^kq^{\binom{k}{2}}\left(\bracketed{n+1}{k}-q^{n+1-k}\bracketed{n}{k-1}\right)x^{n+1-k}\\
&=\sum_{k=0}^{n}(-a)^kq^{\binom{k}{2}}\bracketed{n+1}{k}x^{n+1-k}-\sum_{k=0}^{n}(-a)^kq^{\binom{k}{2}}q^{n+1-k}\bracketed{n}{k-1}x^{n+1-k}\\
&\;\;\;\;\;\;+(-a)^{n+1}q^{\binom{n+1}{2}}-(-a)^{n+1}q^{\binom{n+1}{2}}.
\end{align*}
\begin{align*}
xp_{n,q}(x,a)&=\sum_{k=0}^{n+1}(-a)^kq^{\binom{k}{2}}\bracketed{n+1}{k}x^{n+1-k}-\sum_{k=-1}^{n-1}(-a)^{k+1}q^{\binom{k+1}{2}}q^{n-k}\bracketed{n}{k}x^{n-k}\\
&\;\;\;\;\;\;-(-a)^{n+1}q^{\binom{n+1}{2}}\\
&=p_{n+1,q}(x,a)-(-a)q^n\sum_{k=0}^{n-1}(-a)^{k+1}q^{\binom{k+1}{2}-k}\bracketed{n}{k}x^{n-k}\\
&\;\;\;\;\;\;-(-a)^{n+1}q^{\binom{n+1}{2}}\\
&=p_{n+1,q}(x,a)+aq^n\sum_{k=0}^{n}(-a)^{k+1}q^{\binom{k}{2}}\bracketed{n}{k}x^{n-k}\\
&=p_{n+1,q}(x,a)+aq^np_{n,q}(x,a).
\end{align*}
So, $q^rxp_{n,q}(x,a)=q^rp_{n+1,q}(x,a)+aq^{n+r}p_{n,q}(x,a)$.  With $D_qp_{n,q}(x,a)=[n]_qp_{n-1,q}(x,a)$, we have
$$D_{q^m}p_{n,q}(x,a)=[n]_{q^m}p_{n-1,q}(x,a).$$
Hence,
$$(q^m-1)q^{r}x^2D_{q^m}p_{n,q}(x,a)=(q^m-1)q^{r}x^2[n]_{q^m}p_{n-1,q}(x,a)=(q^{mn}-1)q^{r}x^2p_{n-1,q}(x,a)$$
and
$$q^r[m]_qxD_{q^m}p_{n,q}(x,a)=q^r[m]_qx[n]_{q^m}p_{n-1,q}(x,a)$$
Thus, 
\begin{align*}
&V_{r,q}xg_{n,q}(x,a,r,m)=V_{r,q}xV_{r,q}^{-1}p_{n,q}(x,a)\\
&\;\;\;\;\;=q^rxp_{n,q}(x,a)+q^rp_{n+1,q}(x,a)+aq^{n+r}p_{n,q}(x,a)+[r]_{q}p_{n,q}(x,a)\\
&\;\;\;\;\;\;\;\;+(q^{mn}-1)q^{r}x^2p_{n-1,q}(x,a)+q^r[m]_qx[n]_{q^m}p_{n-1,q}(x,a)\\
&\;\;\;\;\;=q^r(p_{n+1,q}(x,a)+q^nap_{n,q}(x,a))+q^rp_{n+1,q}(x,a)+aq^{n+r}p_{n,q}(x,a)+[r]_{q}p_{n,q}(x,a)\\
&\;\;\;\;\;\;\;\;+(q^{mn}-1)q^{r}[p_{n+1,q}(x,a)+(q^na+q^{n-1}a)p_{n,q}(x,a)+q^{2n-2}a^2p_{n-1,a}(x,a)]\\
&\;\;\;\;\;\;\;\;+q^r[m]_q[n]_{q^m}(p_{n,q}(x,a)+q^{n-1}ap_{n-1,q}(x,a)\\
&\;\;\;\;\;=q^r(2+(q^{mn}-1))p_{n+1,q}(x,a)\\
&\;\;\;\;\;\;\;\;+(q^{n+r}a+aq^{n+r}+[r]_{q}+(q^{mn}-1)q^{r}(q^na+q^{n-1}a)+q^r[m]_q[n]_{q^m}) p_{n,q}(x,a)\\
&\;\;\;\;\;\;\;\;+((q^{mn}-1)q^{r}q^{2n-2}a^2+q^r[m]_q[n]_{q^m}q^{n-1}a)p_{n-1,q}(x,a)
\end{align*}
Applying the operator $V_{r,q}^{-1}:p_{n,q}(x,a)\mapsto g_{n,q}(x,a,r,m)$, then
\begin{align*}
&xg_{n,q}(x,a,r,m)=q^r(2+(q^{mn}-1))g_{n+1,q}(x,a,r,m)\\
&\;\;\;\;\;\;\;\;+(2q^{n+r}a+[r]_{q}+(q^{mn}-1)q^{r}(q^na+q^{n-1}a)+q^r[m]_q[n]_{q^m}) g_{n,q}(x,a,r,m)\\
&\;\;\;\;\;\;\;\;+((q^{mn}-1)q^{r}q^{2n-2}a^2+q^r[m]_q[n]_{q^m}q^{n-1}a)g_{n-1,q}(x,a,r,m)
\end{align*}
We set 
\begin{equation*}
h_{n,q}(x,a,r,m)=q^{m\binom{n}{2}+rn}g_{n,q}(x,a,r,m).
\end{equation*}
That is, 
\begin{equation*}
g_{n,q}(x,a,r,m)=q^{-m\binom{n}{2}-rn}h_{n,q}(x,a,r,m).
\end{equation*}
Then,
\begin{align*}
&xq^{-m\binom{n}{2}-rn}h_{n,q}(x,a,r,m)=q^r(2+(q^{mn}-1))q^{-m\binom{n+1}{2}-r(n+1)}h_{n+1,q}(x,a,r,m)\\
&\;\;\;\;+(2q^{n+r}a+[r]_{q}+(q^{mn}-1)q^{r}(q^na+q^{n-1}a)+q^r[m]_q[n]_{q^m}) q^{-m\binom{n}{2}-rn}h_{n,q}(x,a,r,m)\\
&\;\;\;\;\;+((q^{mn}-1)q^{r}q^{2n-2}a^2+q^r[m]_q[n]_{q^m}q^{n-1}a)q^{-m\binom{n-1}{2}-r(n-1)}h_{n-1,q}(x,a,r,m)
\end{align*}
\begin{align*}
&xh_{n,q}(x,a,r,m)=q^r(2+(q^{mn}-1))q^{-mn-r}h_{n+1,q}(x,a,r,m)\\
&\;\;\;\;+(2q^{n+r}a+[r]_{q}+(q^{mn}-1)q^{r}(q^na+q^{n-1}a)+q^r[m]_q[n]_{q^m}) h_{n,q}(x,a,r,m)\\
&\;\;\;\;\;+((q^{mn}-1)q^{r}q^{2n-2}a^2+q^r[m]_q[n]_{q^m}q^{n-1}a)q^{m(n-1)+r}h_{n-1,q}(x,a,r,m)
\end{align*}
\begin{align}
&xh_{n,q}(x,a,r,m)=(2+(q^{mn}-1))q^{-mn}h_{n+1,q}(x,a,r,m)\nonumber\\
&\;\;\;\;+(2q^{n+r}a+[r]_{q}+(q^{mn}-1)q^{r}(q^na+q^{n-1}a)+q^r[m]_q[n]_{q^m}) h_{n,q}(x,a,r,m)\nonumber\\
&\;\;\;\;\;+((q^{mn}-1)q^{r}q^{2n-2}a^2+q^r[m]_q[n]_{q^m}q^{n-1}a)q^{m(n-1)+r}h_{n-1,q}(x,a,r,m)\label{orthpol}
\end{align}
It is clear that 
\begin{align*}
G_{r,q}(h_{n,q}(x,a,r,m))&=G_{r,q}\left( q^{m\binom{n}{2}+rn}g_{n,q}(x,a,r,m)\right)\\
&=q^{m\binom{n}{2}+rn}\sum\limits_{k=0}^{n}(-a)^{k}q^{\binom{k}{2}}\bracketed{n}{k}_{q}G_{r,q}\left(\langle\langle x\rangle\rangle_{r,m,n-k} \right)\\ 
&=q^{m\binom{n}{2}+rn}\sum\limits_{k=0}^{n}(-a)^{k}q^{\binom{k}{2}}\bracketed{n}{k}_{q}a^{n-k}\\
&=q^{m\binom{n}{2}+rn}p_{n,q}(a,a)\\
&=0,
\end{align*}
\begin{equation*}
G_{r,q}\left( \langle\langle x\rangle\rangle_{rm,o}\right) =a^{0}=1
\end{equation*}
which implies
\begin{equation*}
G_{r,q}(1)=1.
\end{equation*}
and
\begin{align*}
g_{0,q}(x,a,r,m)&=\sum\limits_{k=0}^{0}(-a)^{k}q^{\binom{k}{2}}\bracketed{0}{k}_{q}\langle\langle x\rangle\rangle_{r,m,0-k}\\
&=(-a)^{0}q^{\binom{0}{2}}\bracketed{0}{0}_{q}\langle\langle x\rangle\rangle_{r,m,0}\\
&=1.
\end{align*}
It follows that
\begin{equation*}
h_{0,q}(x,a,r,m)=q^{m\binom{0}{2}+0}g_{0,q}(x,a,r,m)=1
\end{equation*}
and
\begin{equation*}
G_{r,q}\left( h_{0,q}(x,a,r,m)\right)=G_{r,q}(1)=1. 
\end{equation*}
Clearly, $G_{r,q}\left( [x]_{q}h_{n,q}(x,a,r,m)\right)=0 $ and from \eqref{orthpol},
\begin{equation*}
xh_{n,q}(x,a,r,m)=g(n)h_{n+1,q}(x,a,r,m)+f(n)h_{n,q}(x,a,r,m)+c(n)h_{n-1,q}(x,a,r,m)
\end{equation*}
where
\begin{align*}
g(n)&=(2+(q^{mn}-1))q^{-mn}\\
f(n)&=(2q^{n+r}a+[r]_{q}+(q^{mn}-1)q^{r}(q^na+q^{n-1}a)+q^r[m]_q[n]_{q^m})\\
c(n)&=((q^{mn}-1)q^{r}q^{2n-2}a^2+q^r[m]_q[n]_{q^m}q^{n-1}a)q^{m(n-1)+r}.
\end{align*}
Then,
\begin{align*}
x^{2}h_{n,q}(x,a,r,m)&=xxh_{n,q}(x,a,r,m)\\
&=x\left[ g(n)h_{n+1,q}(x,a,r,m)+f(n)h_{n,q}(x,a,r,m)+c(n)h_{n-1,q}(x,a,r,m)\right]\\
&=g(n)xh_{n+1,q}(x,a,r,m)+f(n)xh_{n,q}(x,a,r,m)\\
&\;\;\;\;\;+c(n)xh_{n-1,q}(x,a,r,m)\\
&=g(n)g(n+1)h_{n+2,q}(x,a,r,m)+g(n)f(n+1)h_{n+1,q}(x,a,r,m)\\
&\;\;\;\;\;+g(n)c(n+1)h_{n,q}(x,a,r,m)+g(n)f(n)h_{n+1,q}(x,a,r,m)\\
&\;\;\;\;\;+f^{2}(n)h_{n,q}(x,a,r,m)+f(n)c(n)h_{n-1,q}(x,a,r,m)\\
&\;\;\;\;\;+c(n)g(n-1)h_{n,q}(x,a,r,m)+c(n)f(n-1)h_{n-1,q}(x,a,r,m)\\
&\;\;\;\;\;+c(n)c(n-1)h_{n-2,q}(x,a,r,m)\\
&=g(n)g(n+1)h_{n+2,q}(x,a,r,m)\\
&\;\;\;\;\;+\left[ g(n)f(n+1)+g(n)f(n)\right] h_{n+1,q}(x,a,r,m)\\
&\;\;\;\;\;+\left[ g(n)c(n+1)+f^{2}(n)+c(n)g(n-1)\right] h_{n,q}(x,a,r,m)\\
&\;\;\;\;\;+\left[ f(n)c(n)+c(n)f(n-1)\right] h_{n-1,q}(x,a,r,m)\\
&\;\;\;\;\;+c(n)c(n-1)h_{n-2,q}(x,a,r,m).
\end{align*}
Applying the linear functional $G_{r,q}$ to $[x]_{q}^{2}h_{n,q}(x,a,r,m)$ gives,
\begin{align*}
G_{r,q}\left( x^{2}h_{n,q}(x,a,r,m)\right)&= 0\\
&\vdots\\
G_{r,q}\left( x^{k}h_{n,q}(x,a,r,m)\right)&= 0
\end{align*}
for $k<n$. For $k=n$,
\begin{align*}
x^{n}h_{n,q}(x,a,r,m)=&g(n)x^{n-1}h_{n+1,q}(x,a,r,m)+f(n)x^{n-1}h_{n,q}(x,a,r,m)\\
\;\;\;\;\;&+c(n)x^{n-1}h_{n-1,q}(x,a,r,m).
\end{align*}
Then,
\begin{align*}
G_{r,q}\left( x^{n}h_{n,q}(x,a,r,m)\right)=&g(n)G_{r,q}\left( x^{n-1}h_{n+1,q}(x,a,r,m)\right) +f(n)G_{r,q}\left( x^{n-1}h_{n,q}(x,a,r,m)\right) \\
\;\;\;\;\;&+c(n)G_{r,q}\left( x^{n-1}h_{n-1,q}(x,a,r,m)\right)\\
=&c(n)G_{r,q}\left( x^{n-1}h_{n-1,q}(x,a,r,m)\right)\\
=&c(n)c(n-1)G_{r,q}\left( x^{n-2}h_{n-2,q}(x,a,r,m)\right)\\
=&c(n)c(n-1)c(n-2)G_{r,q}\left( x^{n-3}h_{n-3,q}(x,a,r,m)\right)\\
&\vdots\\
=&c(n)c(n-1)c(n-2)\dots c(1)G_{r,q}\left( x^{0}h_{0,q}(x,a,r,m)\right)\\
=&\left[ \prod\limits_{i=1}^{n}c(i)\right] (1)\\
=&\prod\limits_{i=1}^{n}c(i)
\end{align*}
Since $x^{n}h_{n,q}(x,a,r,m)$ is a sequence of orthogonal polynomials with respect to linear functional $G_{r,q}$,
\begin{equation*}
d_{n,q}=G_{r,q}\left( x^{n}h_{n,q}(x,a,r,m)\right) =\prod\limits_{i=1}^{n}c(i)
\end{equation*}
where
\begin{equation*}
c(i)=((q^{mi}-1)q^{r}q^{2i-2}a^2+q^r[m]_q[i]_{q^m}q^{i-1}a)q^{m(i-1)+r}
\end{equation*}
Then
\begin{align*}
d_{n,q}(n,0)&=G_{r,q}\left[ [x]_{q}^{n}h_{n,q}(x,a,r,m)\right] \\
&=\prod\limits_{i=0}^{n-1}d_{i,q}\\
&=\prod\limits_{i=0}^{n-1}\left\lbrace \prod\limits_{j=1}^{i}\left[ ((q^{mj}-1)q^{r}q^{2j-2}a^2+q^r[m]_q[j]_{q^m}q^{j-1}a)q^{m(j-1)+r}\right]  \right\rbrace \\
&=\prod\limits_{i=0}^{n-1}\left\lbrace \prod\limits_{j=1}^{i}\left[ aq^{m(j-1)+2r}((q^{mj}-1)q^{2j-2}a+[m]_q[j]_{q^m}q^{j-1})\right]  \right\rbrace \\
&=\prod\limits_{i=0}^{n-1}q^{2ri}q^{m(1+2+3+\dots+(i-1))} a^i\prod\limits_{j=1}^{i}\left[ ((q^{mj}-1)q^{2j-2}a+[m]_q[j]_{q^m}q^{j-1})\right] \\
&=\prod\limits_{i=0}^{n-1}q^{2ri}q^{m\binom{i}{2}} a^i\prod\limits_{j=1}^{i}\left[ ((q^{mj}-1)q^{2j-2}a+[m]_q[j]_{q^m}q^{j-1})\right] \\
&=q^{2r(0+1+2+3+\dots+(n-1))+m\left[ \binom{2}{2}+\binom{3}{2}+\dots+\binom{n-1}{2}\right]}a^{0+1+2+\dots+(n-1)}\\
&\;\;\;\prod\limits_{i=0}^{n-1}\prod\limits_{j=1}^{i}\left[ ((q^{mj}-1)q^{2j-2}a+[m]_q[j]_{q^m}q^{j-1})\right] \\
&=q^{2r\binom{n}{2}+(m+1)\binom{n}{3}}a^{\binom{n}{2}}\prod\limits_{i=0}^{n-1}\prod\limits_{j=1}^{i}\left[ ((q^{mj}-1)q^{2j-2}a+[m]_q[j]_{q^m}q^{j-1})\right]\\
&=q^{2r\binom{n}{2}+m\binom{n}{3}}a^{\binom{n}{2}}\prod\limits_{i=0}^{n-1}q^{\binom{i}{2}}\prod\limits_{j=1}^{i}\left[ [mj]_q\left(1-q^j\left(\frac{1-q}{q}\right)a\right)\right]\\
&=q^{2r\binom{n}{2}+(m+1)\binom{n}{3}}a^{\binom{n}{2}}\prod\limits_{i=0}^{n-1}\prod\limits_{j=1}^{i}\left[ [mj]_q\left(1-q^{j-1}(1-q)a\right)\right].
\end{align*}
This result is stated formally in the following theorem.
\begin{thm}\label{thm1}
The Hankel transform of $\Phi_{n}[x,r,m]_{q}$ corresponding to the $0th$ Hankel determinant is given by
\begin{equation}\label{res1}
H\left( \Phi_{n}[x,r,m]_{q}\right) =q^{2r\binom{n}{2}+(m+1)\binom{n}{3}}a^{\binom{n}{2}}\prod\limits_{i=0}^{n-1}\prod\limits_{j=1}^{i}\left[ [mj]_q\left(1-q^{j-1}(1-q)a\right)\right].
\end{equation}
\end{thm}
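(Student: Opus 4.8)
The plan is to realize $\Phi_n[x,r,m]_q$ as the moment sequence of a linear functional and then invoke the standard correspondence, underlying Cigler's method, between the Hankel determinant of a moment sequence and the subdiagonal coefficients of the three-term recurrence of its orthogonal polynomials. First I would record that, working in the basis $\langle\langle x\rangle\rangle_{r,m,n}$, the operator $V_{r,q}$ sends $x^n$ to $\Phi_n[x,r,m]_q$ while the functional $G_{r,q}$ has moments $G_{r,q}(x^n)=\Phi_n[a,r,m]_q$; consequently the Hankel determinant $\det(\Phi_{i+j})$ is precisely the Hankel determinant of the moment sequence of $G_{r,q}$, and the entire problem reduces to exhibiting a family orthogonal with respect to $G_{r,q}$.

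Next I would construct that family explicitly. Using $V_{r,q}^{-1}$ I would transport the $q$-polynomials $p_{n,q}(x,a)=\sum_k(-a)^k q^{\binom{k}{2}}\binom{n}{k}_q x^{n-k}$ back to the $\langle\langle x\rangle\rangle$-side, obtaining $g_{n,q}$, and then rescale by $q^{m\binom{n}{2}+rn}$ to define the candidate polynomials $h_{n,q}$. To confirm these are orthogonal I would check the two defining conditions: $G_{r,q}(1)=1$, and $G_{r,q}(h_{n,q})=q^{m\binom{n}{2}+rn}p_{n,q}(a,a)=0$ for $n\ge 1$, the vanishing being the $q$-binomial identity $\sum_k(-a)^k q^{\binom{k}{2}}\binom{n}{k}_q a^{n-k}=0$.

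The technical heart is the derivation of the three-term recurrence $xh_{n,q}=g(n)h_{n+1,q}+f(n)h_{n,q}+c(n)h_{n-1,q}$, and this is the step I expect to be the main obstacle. For it I would conjugate multiplication by $x$ through $V_{r,q}$ via $V_{r,q}xV_{r,q}^{-1}=x\tilde D_{qx}$, apply $x\tilde D_{qx}$ to $p_{n,q}(x,a)$ term by term, and reassemble using the two computational lemmas $xp_{n,q}=p_{n+1,q}+aq^n p_{n,q}$ and $D_{q^m}p_{n,q}=[n]_{q^m}p_{n-1,q}$. The difficulty is that the expansion produces several cross-terms in $p_{n+1,q},p_{n,q},p_{n-1,q}$ that must be tracked carefully, so that after pushing back through $V_{r,q}^{-1}$ and absorbing the $q^{m\binom{n}{2}+rn}$ normalization the coefficients collapse to the clean forms $g(n),f(n),c(n)$ recorded in \eqref{orthpol}.

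Finally, with orthogonality established, the general theory yields the Hankel determinant as the iterated product $\prod_{i=0}^{n-1}\prod_{j=1}^i c(j)$, only the subdiagonal coefficient surviving because $G_{r,q}(x^k h_{n,q})=0$ for $k<n$. The remaining work is purely algebraic: using $[m]_q[j]_{q^m}=[mj]_q$ and $q^{mj}-1=(q-1)[mj]_q$ to factor $c(j)=aq^{m(j-1)+2r}[mj]_q\,q^{j-1}\big(1-q^{j-1}(1-q)a\big)$, and then summing exponents with the hockey-stick identities $\sum_{i=0}^{n-1}i=\binom{n}{2}$ and $\sum_{i=0}^{n-1}\binom{i}{2}=\binom{n}{3}$ to collect the prefactor $q^{2r\binom{n}{2}+(m+1)\binom{n}{3}}a^{\binom{n}{2}}$, which produces exactly \eqref{res1}.
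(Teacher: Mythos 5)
Your proposal follows essentially the same route as the paper's own proof: Cigler's method with the functional $G_{r,q}$ and operator $V_{r,q}$, transporting $p_{n,q}(x,a)$ back to $g_{n,q}$ via $V_{r,q}^{-1}$, rescaling to $h_{n,q}$, deriving the three-term recurrence \eqref{orthpol} through the conjugation $V_{r,q}xV_{r,q}^{-1}=x\tilde D_{qx}$, and extracting the Hankel determinant as $\prod_{i=0}^{n-1}\prod_{j=1}^{i}c(j)$. Your factorization $c(j)=aq^{m(j-1)+2r}[mj]_q\,q^{j-1}\bigl(1-q^{j-1}(1-q)a\bigr)$ and the exponent bookkeeping via $\sum_{i=0}^{n-1}i=\binom{n}{2}$ and $\sum_{i=0}^{n-1}\binom{i}{2}=\binom{n}{3}$ reproduce \eqref{res1} exactly, so the plan is correct and matches the paper.
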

Note that when $m=1$, \eqref{res1} yields
$$H\left( \Phi_{n}[x,r,1]_{q}\right) =q^{2r\binom{n}{2}+2\binom{n}{3}}a^{\binom{n}{2}}\prod\limits_{i=0}^{n-1}[i]_q!((1-q)a;q)_i$$
where 
\begin{equation*}
(x;q)_{i}=\prod\limits_{j=0}^{i-1}\left( 1-q^{j}x\right). 
\end{equation*}
This is exactly the result obtained by Cigler \cite{Cig2}.

\smallskip
As a direct consequence of Theorem \ref{thm1}, we have the following corollary, which contains the main result of this paper.
\begin{cor}
The Hankel transform of the sequence $\left(D_{m,r}[n]_q\right)_{n=0}^{\infty}$ is given by
\begin{equation*}
H\left(D_{m,r}[n]_q\right) =q^{2r\binom{n}{2}+(m+1)\binom{n}{3}}\prod\limits_{i=0}^{n-1}((1-q)a;q)_i\prod\limits_{j=1}^{i}[mj]_q.
\end{equation*}
\end{cor}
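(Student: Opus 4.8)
The plan is to deduce the corollary directly from Theorem~\ref{thm1} by specializing the polynomial variable to $x=1$. By \eqref{genq1} we have $\Phi_n[1,r,m]_q=D_{m,r}[n]_q$, so the Hankel matrix $\left(D_{m,r}[i+j]_q\right)_{0\le i,j\le n-1}$ is the value at $x=1$ of the Hankel matrix $\left(\Phi_{i+j}[x,r,m]_q\right)_{0\le i,j\le n-1}$, whose determinant is recorded in \eqref{res1}. The proof therefore reduces to rewriting the right-hand side of \eqref{res1} in the claimed closed form and tracking how the passage to $D_{m,r}[n]_q$ acts on it.

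First I would separate the two kinds of factors in the double product of \eqref{res1}. For each fixed $i$ the factors $[mj]_q$ and $\left(1-q^{j-1}(1-q)a\right)$ multiply term by term, so
\begin{equation*}
\prod_{j=1}^{i}\left[[mj]_q\left(1-q^{j-1}(1-q)a\right)\right]=\left(\prod_{j=1}^{i}[mj]_q\right)\prod_{j=1}^{i}\left(1-q^{j-1}(1-q)a\right).
\end{equation*}
Reindexing the last product by $\ell=j-1$ and using $(z;q)_i=\prod_{\ell=0}^{i-1}\left(1-q^{\ell}z\right)$ with $z=(1-q)a$ turns it into $((1-q)a;q)_i$. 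Substituting back into \eqref{res1} yields
\begin{equation*}
H\left(\Phi_n[x,r,m]_q\right)=q^{2r\binom{n}{2}+(m+1)\binom{n}{3}}\,a^{\binom{n}{2}}\prod_{i=0}^{n-1}((1-q)a;q)_i\prod_{j=1}^{i}[mj]_q,
\end{equation*}
which already matches the target up to the single prefactor $a^{\binom{n}{2}}$.

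The remaining and genuinely delicate step is to account for this prefactor $a^{\binom{n}{2}}$. In the orthogonal-polynomial computation behind \eqref{res1} it arises by pulling one power of $a$ out of each $c(i)$ (exactly the step $c(i)=a\,q^{m(i-1)+2r}(\cdots)$ in the derivation), so that the powers accumulate to $a^{0+1+\cdots+(n-1)}=a^{\binom{n}{2}}$. To obtain the corollary one must show that the specialization producing $D_{m,r}[n]_q$ from $\Phi_n[x,r,m]_q$ removes precisely this accumulated factor while leaving each $q$-Pochhammer factor $((1-q)a;q)_i$ intact. I expect this separation---disentangling the role of $a$ as the pulled-out leading power from its role inside the Pochhammer symbols---to be the main obstacle. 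Once it is carried out, \eqref{res1} reduces to
\begin{equation*}
H\left(D_{m,r}[n]_q\right)=q^{2r\binom{n}{2}+(m+1)\binom{n}{3}}\prod_{i=0}^{n-1}((1-q)a;q)_i\prod_{j=1}^{i}[mj]_q,
\end{equation*}
which is the assertion of the corollary.
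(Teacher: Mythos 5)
Your algebraic rewriting is correct and is exactly the step the paper leaves implicit: for each fixed $i$ one has
$\prod_{j=1}^{i}\bigl[[mj]_q\bigl(1-q^{j-1}(1-q)a\bigr)\bigr]=\bigl(\prod_{j=1}^{i}[mj]_q\bigr)\,((1-q)a;q)_i$,
so Theorem \ref{thm1} differs from the corollary only by the prefactor $a^{\binom{n}{2}}$. The genuine gap is your final step, and it is a gap that cannot be filled as you describe it. You defer the removal of $a^{\binom{n}{2}}$ to an unspecified ``separation'' argument, asserting that the specialization producing $D_{m,r}[n]_q$ from $\Phi_n[x,r,m]_q$ kills this prefactor while leaving every factor $((1-q)a;q)_i$ intact. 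No such specialization exists. The passage from $\Phi_n[x,r,m]_q$ to $D_{m,r}[n]_q$ is the substitution $x=1$, and $x$ does not occur on the right-hand side of \eqref{res1} at all; the parameter that does occur there is $a$, the value assigned by the linear functional $G_{r,q}$, and it appears both in the prefactor $a^{\binom{n}{2}}$ and inside each Pochhammer symbol. Any substitution for $a$ therefore acts on both occurrences simultaneously: $a=1$ removes $a^{\binom{n}{2}}$ but turns every $((1-q)a;q)_i$ into $((1-q);q)_i$, while leaving $a$ free keeps the prefactor. There is no intermediate option of the kind your plan requires.

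What you have actually uncovered is a defect in the statement itself rather than a missing lemma. The paper offers no proof of the corollary (it is announced as a ``direct consequence'' of Theorem \ref{thm1}); the only coherent derivation identifies the variable of the theorem with the functional's parameter $a$ (as the derivation of \eqref{res1} in effect does, since the moments are $G_{r,q}(x^n)$, i.e.\ the polynomials evaluated at $a$) and then sets $a=1$, in which case the right-hand side should read
$q^{2r\binom{n}{2}+(m+1)\binom{n}{3}}\prod_{i=0}^{n-1}((1-q);q)_i\prod_{j=1}^{i}[mj]_q$,
with no free $a$ anywhere. A Hankel transform of a numerical sequence cannot depend on an auxiliary parameter, so the $a$ surviving in the printed corollary is an error carried over from \eqref{res1}, not something to be proved. (There is a further wrinkle: the identification $D_{m,r}[n]_q=\Phi_n[1,r,m]_q$ rests on \eqref{genq1}, whereas Section 3 computes $V_{r,q}(x^n)=\sum_{k}W_{m,r}[n,k]_q q^{rk+m\binom{k}{2}}x^k$ and calls that $\Phi_n[x,r,m]_q$, so even the $a=1$ reading requires reconciling the two conflicting definitions.) Your proposal is sound up to the point where it meets this inconsistency, but as written it proves Theorem \ref{thm1} restated, not the corollary; a correct write-up must either set $a=1$ throughout, thereby changing the Pochhammer factors, or flag the corollary's statement as misprinted.
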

\begin{thm}
The Hankel transform of $\Phi_{n}[x,r,m]_{q}$ corresponding to the $1st$ Hankel determinant is given by
\begin{align*}
d_{n,q}(n,1)&=q^{2r\binom{n}{2}+(m+1)\binom{n}{3}}a^{\binom{n}{2}}\prod\limits_{i=0}^{n-1}((1-q)a;q)_i\prod\limits_{j=1}^{i}[mj]_q\\
&\;\;\;\;\;\;\;\sum\limits_{k=0}^{n}(-1)^{n}[x]_{q}^{k}q^{\binom{k}{2}}\bracketed{n}{k}_{q}\prod\limits_{j=0}^{k-1}\dfrac{[r+jm]_{q}}{q^{r+jm}}. 
\end{align*}
\end{thm}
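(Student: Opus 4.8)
The plan is to rerun the orthogonal-polynomial argument that produced Theorem~\ref{thm1}, now tracking the effect of shifting the Hankel matrix by one index. The structural facts I need are already in place: the polynomials $h_{n,q}(x,a,r,m)$ are orthogonal with respect to $G_{r,q}$, they are monic in $x$ (since $h_{n,q}=q^{m\binom{n}{2}+rn}g_{n,q}$ and the $k=0$ term of $g_{n,q}$ contributes the monic $\langle\langle x\rangle\rangle_{r,m,n}$ scaled by $q^{-rn-m\binom{n}{2}}$), they obey the three-term recurrence \eqref{orthpol} with coefficients $g(n),f(n),c(n)$, and $G_{r,q}(x^{k}h_{n,q})=0$ for $k<n$ while $G_{r,q}(x^{n}h_{n,q})=\prod_{i=1}^{n}c(i)$. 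These are exactly the ingredients that collapsed the $0$th determinant to $d_{n,q}(n,0)=\prod_{i=0}^{n-1}\prod_{j=1}^{i}c(j)$, so I expect the same bookkeeping to resurface, supplemented by one extra contribution coming from the shift.

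First I would set up $d_{n,q}(n,1)$ as the once-shifted Hankel object and expand it through the standard representation of the orthogonal polynomials as ratios of Hankel-type determinants. Cofactor expansion along the distinguished row isolates the top-left $n\times n$ Hankel minor, which is precisely $d_{n,q}(n,0)$; this is what should let the $a$-dependent prefactor $q^{2r\binom{n}{2}+(m+1)\binom{n}{3}}a^{\binom{n}{2}}\prod_{i=0}^{n-1}((1-q)a;q)_{i}\prod_{j=1}^{i}[mj]_{q}$ factor out unchanged, reducing the problem to identifying the remaining cofactor combination in the free variable. I expect this remainder to collapse to the orthogonal polynomial $g_{n,q}$ itself (equivalently $q^{-m\binom{n}{2}-rn}h_{n,q}$), which is why the statement carries a genuine $[x]_{q}$-dependence while $d_{n,q}(n,0)$ does not.

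Next I would substitute the explicit series $g_{n,q}(x,a,r,m)=\sum_{k}(-a)^{k}q^{\binom{k}{2}}\bracketed{n}{k}_{q}\langle\langle x\rangle\rangle_{r,m,n-k}$, convert each factorial $\langle\langle x\rangle\rangle_{r,m,k}=\prod_{j=0}^{k-1}\frac{x-[r+jm]_{q}}{q^{r+jm}}$ into the power basis in $[x]_{q}$, and reorganize the resulting double sum using the $q$-binomial theorem and $q$-Vandermonde summation. Since the target factor $\sum_{k=0}^{n}(-1)^{n}[x]_{q}^{k}q^{\binom{k}{2}}\bracketed{n}{k}_{q}\prod_{j=0}^{k-1}\frac{[r+jm]_{q}}{q^{r+jm}}$ is manifestly $a$-free, a crucial internal check is that all $a$-dependence cancels once the $n\times n$ minor has been extracted; verifying this cancellation together with the sign $(-1)^{n}$ and the $q$-powers $q^{\binom{k}{2}}$ is where the calculation must be carried out most carefully.

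I expect the main obstacle to be precisely this last reorganization. Because shifting couples neighbouring rows of the Hankel matrix, the extra factor is not merely one of the scalar recurrence coefficients $f(i)$ but an entire polynomial, and pinning it down in closed form forces a controlled passage between the $\langle\langle x\rangle\rangle_{r,m,\bullet}$ basis and the powers $[x]_{q}^{k}$. A secondary but real difficulty is justifying the determinant-ratio representation in this $q$-deformed normalization, where $x$ and $[x]_{q}$ are handled interchangeably, so that the boundary term genuinely reproduces $g_{n,q}$ rather than a neighbouring combination. Once the factorization $d_{n,q}(n,1)=d_{n,q}(n,0)\cdot S$ is secured and $S$ is identified with the stated sum, the theorem follows at once from Theorem~\ref{thm1}.
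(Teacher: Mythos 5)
Your overall skeleton --- factor $d_{n,q}(n,1)=d_{n,q}(n,0)\cdot S$ using the orthogonal-polynomial machinery already assembled for Theorem~\ref{thm1}, then identify $S$ --- is the same route the paper takes, and the determinant-ratio representation you invoke is indeed the standard proof of the fact the paper simply cites as coming ``from the Gram--Schmidt orthogonalization process.'' But your identification of $S$ is wrong, and the error is conceptual rather than computational. Both $d_{n,q}(n,0)$ and $d_{n,q}(n,1)$ are determinants built from the moments $G_{r,q}(x^{j})$; they are scalars in $a,q,r,m$ with no $x$-dependence whatsoever, so their ratio $S$ cannot ``collapse to the orthogonal polynomial $g_{n,q}$ itself.'' In the determinant representation of the monic orthogonal polynomial, the shifted minor $d_{n,q}(n,1)$ arises as the cofactor of the entry $x^{0}$ in the bottom row $(1,x,\dots,x^{n})$ --- that is, it is produced precisely by evaluating the polynomial at $x=0$. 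The correct identification, and the one the paper uses, is
\begin{equation*}
d_{n,q}(n,1)=(-1)^{n}\,g_{n,q}(0,a,r,m)\,d_{n,q}(n,0),
\end{equation*}
the \emph{value} of the orthogonal polynomial at zero, not the polynomial.

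This misidentification makes your planned execution fail at exactly the place you flag as the ``crucial internal check.'' You intend to prove that all $a$-dependence cancels, because you read the factor $\sum_{k}(-1)^{n}[x]_{q}^{k}q^{\binom{k}{2}}\bracketed{n}{k}_{q}\prod_{j=0}^{k-1}[r+jm]_{q}/q^{r+jm}$ in the statement as $a$-free and genuinely $x$-dependent. It is neither: the $[x]_{q}^{k}$ there is a typo for $a^{k}$, as the paper's own proof makes clear --- it computes $p_{n,q}(0)=\sum_{k}a^{k}q^{\binom{k}{2}}\bracketed{n}{k}_{q}\prod_{j=0}^{k-1}[r+jm]_{q}/q^{r+jm}$ and then silently rewrites $a^{k}$ as $[x]_{q}^{k}$ in the final line. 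The extra factor genuinely depends on $a$, so the cancellation you propose to verify does not occur, and the $q$-Vandermonde/basis-conversion apparatus you plan to deploy is both unnecessary and unavailing. All that is actually needed, once the displayed factorization is in hand, is the one-line evaluation $\langle\langle 0\rangle\rangle_{r,m,k}=(-1)^{k}\prod_{j=0}^{k-1}[r+jm]_{q}/q^{r+jm}$ substituted into the explicit series for $g_{n,q}$, followed by multiplication with the formula of Theorem~\ref{thm1} for $d_{n,q}(n,0)$.
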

\begin{proof}
From Gram-Schmidt orthogonalization process, we obtain
\begin{equation*}
d_{n,q}(n,1)=d_{n,q}(n,0)(-1)^{n}p_{n,q}(0)
\end{equation*}
where
$p_{n,q}(0)$ is a sequence of orthogonal polynomials i.e.,
\begin{equation*}
g_{n,q}(x,a,r,m)=\sum\limits_{k=0}^{n}\left( -a\right)^{k}q^{\binom{k}{2}}\bracketed{n}{k}_{q}\langle\langle x\rangle\rangle_{r,m,k}=p_{n,q}(x)
\end{equation*}
which implies
\begin{equation*}
p_{n,q}(0)=\sum\limits_{k=0}^{n}\left( -a\right)^{k}q^{\binom{k}{2}}\bracketed{n}{k}_{q}\langle\langle 0\rangle\rangle_{r,m,k}.
\end{equation*}
Since,
\begin{align*}
\langle\langle 0\rangle\rangle_{r,m,k}&=\prod\limits_{j=0}^{k-1}\dfrac{\left( [0]_{q}-[r+jm]_{q}\right) }{q^{r+jm}}\\
&=\prod\limits_{j=0}^{k-1}\dfrac{-[r+jm]_{q}}{q^{r+jm}}\\
&=\left( \dfrac{-[r]_{q}}{q^r}\right) \left(\dfrac{-[r+j]_q}{q^{r+m}} \right) \left( \dfrac{-[r+(k-1)m]_{q}}{q^{r+(k-1)m}}\right)\\
&=(-1)^{k}\prod\limits_{j=0}^{k-1}\dfrac{[r+jm]_{q}}{q^{r+jm}}.
\end{align*}
Then,
\begin{align*}
p_{n,q}(0)&=\sum\limits_{k=0}^{n}\left( -a\right)^{k}q^{\binom{k}{2}}\bracketed{n}{k}_{q}(-1)^{k}\prod\limits_{j=0}^{k-1}\dfrac{[r+jm]_{q}}{q^{r+jm}}\\
&=\sum\limits_{k=0}^{n}(-1)^{k}a^{k}q^{\binom{k}{2}}\bracketed{n}{k}_{q}(-1)^{k}\prod\limits_{j=0}^{k-1}\dfrac{[r+jm]_{q}}{q^{r+jm}}\\
&=\sum\limits_{k=0}^{n}a^{k}q^{\binom{k}{2}}\bracketed{n}{k}_{q}\prod\limits_{j=0}^{k-1}\dfrac{[r+jm]_{q}}{q^{r+jm}}
\end{align*}
which implies
\begin{equation*}
(-1)^{n}p_{n,q}(0)=\sum\limits_{k=0}^{n}(-1)^{n}[x]_{q}^{k}q^{\binom{k}{2}}\bracketed{n}{k}_{q}\prod\limits_{j=0}^{k-1}\dfrac{[r+jm]_{q}}{q^{r+jm}}.
\end{equation*}
Hence,
\begin{align*}
d_{n,q}(n,1)&=d_{n,q}(n,0)(-1)^{n}p_{n,q}(0)\\
&=q^{2r\binom{n}{2}+(m+1)\binom{n}{3}}a^{\binom{n}{2}}\prod\limits_{i=0}^{n-1}((1-q)a;q)_i\prod\limits_{j=1}^{i}[mj]_q\\
&\;\;\;\;\;\;\;\sum\limits_{k=0}^{n}(-1)^{n}[x]_{q}^{k}q^{\binom{k}{2}}\bracketed{n}{k}_{q}\prod\limits_{j=0}^{k-1}\dfrac{[r+jm]_{q}}{q^{r+jm}} 
\end{align*}
\end{proof}

\bigskip
\noindent{\bf Acknowledgement}. This research has been funded by Cebu Normal University (CNU) and the Commission  on Higher Education - Grants-in-Aid for Research (CHED-GIA).

\bigskip

\end{document}